\DeclareMathAlphabet{\pazocal}{OMS}{zplm}{m}{n}
\author{Radhika Vasisht $^{1,\dag}$ and Ruchi Das $^{1}$}
\title{Exploring $\mathcal{F}$-Sensitivity for Non-Autonomous Systems} 
\theoremstyle{definition}
\newtheorem{defn}{Definition}[section]
\providecommand{\keywords}[1]{\textbf{Keywords :} #1}
\providecommand{\msc}[1]{\textbf{MSC(2010)} #1}
\theoremstyle{plain}
\newtheorem{thm}{Theorem}[section]
\newtheorem{exm}{Example}[section]
\newtheorem{rmk}{Remark}[section]
\newtheorem{lem}{Lemma}[section]
\begin{document}
\date{}
\maketitle
\begin{abstract}
We study some stronger forms of sensitivity, namely, $\mathcal{F}$-sensitivity and weakly$\mathcal{F}$-sensitivity for non-autonomous discrete dynamical systems. We obtain a condition under which these two forms of sensitivity are equivalent. We also justify the difference between $\mathcal{F}$-sensitivity and some other stronger forms of sensitivity through examples. We explore the relation between the $\mathcal{F}$-sensitivity of the non-autonomous system $(X,f_{1,\infty})$ and autonomous system  $(X,f)$, where $f_n$ is a sequence of continuous functions converging uniformly to $f$. We also study the $\mathcal{F}$-sensitivity of a non-autonomous system $(X,f_{1,\infty})$, generated by a finite family of maps $\mathbb{F}=\{f_1, f_2, \ldots, f_k\}$ and give an example showing that such non-autonomous systems can be $\mathcal{F}$-sensitive, even when none of the maps in the family $\mathbb{F}$ is $\mathcal{F}$-sensitive.
\end{abstract}
\keywords{Non-autonomous dynamical system, Furstenberg Families,  $\mathcal{F}$-sensitive, weakly $\mathcal{F}$-sensitive}
\\ \msc{Primary 54H20; Secondary 37B55}

\renewcommand{\thefootnote}{\fnsymbol{footnote}}
\footnotetext{\hspace*{-5mm}
\renewcommand{\arraystretch}{1}
\begin{tabular}{@{}r@{}p{15cm}@{}}
$^\dag$& the corresponding author. Email address: radhika.vasisht92@gmail.com (R. Vasisht)\\
$^1$&Department of Mathematics, University of Delhi, Delhi-110007, India\\
\end{tabular}}

\section{Introduction}
Beginning with the contributions of Poincar\'e and Lyapunov, theory of dynamical systems has seen significant developments in recent years. This theory has gained considerable interest and has been found to have useful connections with many different areas of mathematics. An autonomous discrete dynamical system is a dynamical system which has no external input and always evolves according to the same unchanging law. Most of the natural systems in this world are subjected to time-dependent external forces and their modelling leads to a mathematical theory of what is called non-autonomous discrete dynamical system.  The theory of non-autonomous dynamical systems helps characterizing the behaviour of various natural phenomenons which cannot be modelled by autonomous systems. Over recent years, the theory of such systems has developed into a highly active field related to, yet recognizably distinct from that of classical autonomous dynamic systems \cite{MR2960260, MR3206430, MR3344142, MR3485444}. We first introduce some notations. Consider the following non-autonomous discrete dynamical system (N.D.S) $(X,f_{1,\infty})$:
\begin{equation} x_{n+1}=f_n(x_n) , n \geq 1,\nonumber \end{equation}
where $(X,d)$ is a compact metric space and $f_n: X \rightarrow X$ is a continuous map. For convenience, we denote $({f_n})_{n =1}^\infty $ by $f_{1,\infty}$. Naturally, a difference equation of the form $x_{n+1}=f_n(x_n)$ can be thought of as the discrete analogue of a non-autonomous differential equation $\frac{dx}{dt}=f(x,t)$. 
\\ Chaos is a universal dynamical behavior of nonlinear dynamical systems and one of the central topics of research in nonlinear science. It is well known that sensitivity characterizes the unpredictability of chaotic phenomena, and is one of the essential conditions in various definitions of chaos. Therefore, the study on sensitivity has attracted a lot of attention from many researchers. For continuous self maps of compact metric spaces, Moothathu \cite{MR2351026} gave an insight of some stronger forms of sensitivity based on the largeness of subsets of $\mathbb{N}$. Since then several other stronger forms of sensitivity have been studied by different researchers for both autonomous and non-autonomous systems\cite{MR2922208, MR3528201}.
\\ In \cite{MR2600324}, the authors introduced and studied the concept of sensitivity via Furstenberg families for autonomous systems. A lot of work has been done in this direction since then \cite{MR3339062, MR3543466}. Recently, some researchers have introduced and studied various versions of sensitivity via Furstenberg families for non-autonomous discrete systems \cite{MR3623040, MR3584037}. Motivated by all the research done for sensitivity via Furstenberg families, we study $\mathcal{F}$-sensitivity and weakly $\mathcal{F}$-sensitivity for non-autonomous discrete dynamical systems.
\\In section 2, we recall some concepts about Furstenberg families and also give preliminaries required for other sections.
In section 3, we study the relations between $\mathcal{F}$-sensitivity and some other stronger forms of sensitivity . We obtain a condition under which $\mathcal{F}$-sensitivity and weakly $\mathcal{F}$-sensitivity for the system $(X,f_{1,\infty})$ are equivalent. We give some characterizations of $\mathcal{F}$-sensitivity. In section 4, we investigate $\mathcal{F}$-sensitivity and weakly $\mathcal{F}$-sensitivity for different non-autonomous systems. We explore the relation between the $\mathcal{F}$-sensitivity of the non-autonomous system $(X,f_{1,\infty})$ and autonomous system  $(X,f)$, where $f_n$ is a sequence of continuous functions converging uniformly to $f$. We also study the $\mathcal{F}$-sensitivity of a non-autonomous system $(X,f_{1,\infty})$, generated by a finite family of maps $\mathbb{F}=\{f_1, f_2, \ldots, f_k\}$ and give an example showing that such non-autonomous systems can be $\mathcal{F}$-sensitive, even when none of the maps in the family $\mathbb{F}$ is $\mathcal{F}$-sensitive.

\section{Preliminaries}

Let $(X,f_{1,\infty})$ be a non-autonomous dynamical system. For any two open sets U and V of $X$, let$N_{f_{1,\infty}}(U,V)= \{n\in \mathbb{N} : f_1^n(U)\cap V\ne \emptyset \}$. In particular, $N_{f_{1,\infty}}(x,V)= \{n\in \mathbb{N} : f_1^n(x)\in V\}$, where $f_1^i(x) = f_{i}\circ\cdots\circ f_1(x)$.

\begin{defn}
A system $(X,f_{1,\infty})$ is said to $\textit{feeble open}$ if for any non-empty open set $U$ in $X$, $int(f_{n}(U))$ is non-empty for each $n\in\mathbb{N}$.
\end{defn}

\begin{defn}
The system $(X,f_{1,\infty})$ is said to have $\textit {sensitive dependence on initial}$ $ \textit{conditions}$ if there exists a constant $ \delta_0>0$ such that for any $x_0 \in X$ and any neighbourhood U of $x_0$, there exists $y_0\in X\cap U$ and a positive integer n such that $d(x_n,y_n)>\delta_0$, where $\{x_i\}_{i=0}^\infty$ and  $\{y_i\}_{i=0}^\infty$ are the orbits of the system $(X,f_{1,\infty})$ starting from $x_0$ and $y_0$ respectively, the constant $\delta_0>0$ is called a sensitivity constant of the system $(X,f_{1,\infty})$.
Here  $({x_i})_{i=0}^\infty = \{x\in X$ such that $f_1^i(x) , i\geq 1\}$.
Thus, system $(X,f_{1,\infty})$ has sensitive dependence on initial conditions or is $\textit{sensitive}$ in $X$ if there exists a constant $\delta>0$ such that for any non-empty open set V of $X$, $N_{f_{1,\infty}}(V,\delta)$ is non-empty, where $N_{f_{1,\infty}}(V,\delta) = \{n\in \mathbb{N}$ such that, there exist   $x,y \in V$ with $ d(f_1^n(x),f_1^n(y))> \delta \}$.
\end{defn}

\begin{defn}
The system $(X,f_{1,\infty})$ is called $\textit{cofinitely sensitive}$ in $X$ if there exists a constant $\delta>0$ such that for any non-empty open set V of $X$,there exists $N\geq1$ such that $[N,\infty)\cap\mathbb{N} \subset N_{f_{1,\infty}}(V,\delta)$; $\delta$ is called a constant of cofinite sensitivity.
\end{defn}
  
\begin{defn}
The system $(X,f_{1,\infty})$ is said to have $\textit{syndetic sensitivity}$ in $X$ if there exists a constant $\delta>0$ such that for any non-empty open set V of $X$, $N_{f_{1,\infty}}(V,\delta)$ is syndetic; $\delta$ is called a constant of syndetic sensitivity.
\end{defn}
Now, we recall some concepts related to Furstenberg families.
Let $\mathcal{P}$ be the collection of all subsets of $\mathbb{Z^+}$. A collection $\mathcal{F}\subseteq \mathcal{P}$ is called a $\mathit{Furstenberg\hspace{0.15cm} family}$, if it is hereditary upwards, that is, $F_1 \subset F_2$ and $F_1\in \mathcal{F}$ implies $F_2\in \mathcal{F}$. A family $\mathcal{F}$ is proper if it is a proper subset of $\mathcal{P}$, that is, it is neither whole $\mathcal{P}$ nor empty. Throughout this paper, all Furstenberg families are proper.
\\For a Furstenberg family $\mathcal{F}$, the dual family\begin{align*}
\mathit{k} \mathcal{F} = \{F\in \mathcal{P}:F\cap F'\neq  \emptyset, for\hspace{0.15cm} all\hspace{0.15cm} F'\in \mathcal{F}\}=\{F\in \mathcal{P}: \mathbb{Z^+}\setminus F\notin \mathcal{F}\}
\end{align*} 
Clearly, if $\mathcal{F}$ is a Furstenberg family, then so is $\mathit{k}\mathcal{F}$. One can note that $\mathit{k}(\mathit{k}\mathcal{F})=\mathcal{F}$. Note that the family $\mathcal{B}$ of all infinite subsets of $\mathbb{Z^+}$ is a Furstenberg family and $\mathit{k}\mathcal{B}$ is the family of all cofinite subsets of $\mathbb{Z^+}$.
\\ For Furstenberg families $\mathcal{F}_1$ and $\mathcal{F}_2$, let $\mathcal{F}_1.\mathcal{F}_2=\{F_1 \cap F_2: F_1 \in \mathcal{F}_1, F_2 \in \mathcal{F}_2\}$. A Furstenberg family $\mathcal{F}$ is said to be filterdual if $\mathcal{F}$ is proper and $\mathit{k}\mathcal{F}.\mathit{k}\mathcal{F}\subseteq \mathit{k}\mathcal{F}$. A Furstenberg family $\mathcal{F}$ is said to be translation invariant if for any $F\in \mathcal{F}$ and any $i\in \mathbb{Z^+}, \hspace{0.15cm} F+i \in \mathcal{F}$ and $F-i\in \mathcal{F}$.
\\Let $A\subseteq X$ and $x\in X$. If $N_{f_{1,\infty}}(x,A)\in \mathcal{F}$, then $x$ is called an $\mathcal{F}$-attaching point of $A$. The set of all $\mathcal{F}$-attaching points of $A$ is called the $\mathcal{F}$-attaching set of $A$ and is denoted by $\mathcal{F}(A)$. Clearly, \begin{align*}
\mathcal{F}(A)= \bigcup_{F\in\mathcal{F}}\bigcap_{n\in\mathcal{F}}f_1^n(A).
\end{align*}
Let $(X,f_{1,\infty})$ be a N.D.S. A Furstenberg family $\mathcal{F}$ is compatible with the system $(X,f_{1,\infty})$ if the $\mathcal{F}$-attaching set of $U$ is a $G_{\delta}$ set of $X$, for each open set $U$ of $X$.
\\Throughout this paper, we will use the following relations on $X$. For a given $\delta>0$:
\begin{align*}
 V_{\delta}=\{(x,y): d(x,y)< \delta\},
\end{align*}
\begin{align*}
\overline{V_{\delta}} = \{(x,y): d(x,y)\leq \delta \}
\end{align*}

\begin{defn}
The set of $\mathcal{F}-\textit{asymptotic pairs}$, $Asym_{\delta}(\mathcal{F})=\{(x,y): N_{f_{1,\infty}}((x,y),\overline{V_{\delta}}\in \mathit{k}\mathcal{F})\}= \mathit{k}\mathcal{F}(\overline{V_{\delta}})$ and $Asym_{\delta}(\mathcal{F})(x)=\{y: (x,y)\in Asym_{\delta}(\mathcal{F})\}$.
\\Clearly, $Asym(\mathcal{F})=\bigcap_{\delta>0}Asym_{\delta}(\mathcal{F})$ and $Asym(\mathcal{F})(x)=\bigcap_{\delta>0}Asym_{\delta}(\mathcal{F})(x)$.
\end{defn}

\begin{defn}
The system $(X,f_{1,\infty})$ is $\textit{weakly}\hspace{0.15cm} \mathcal{F}-\textit{sensitive}$ if there exists a positive $\delta>0$, such that for every open subset $U$ of $X$, there exists $x,y \in U$ such that the pair $(x,y)$ is not $\mathcal{F}$-$\delta$-asymptotic, that is, $\{n\in \mathbb{N}: d(f_1^n(x),f_1^n(y))>\delta\}\in \mathcal{F}$ or we can say $N_{f_{1,\infty}}((x,y),X\times X\setminus \overline{V_{\delta}})\in \mathcal{F}$,  where $\delta$ is called a weakly $\mathcal{F}$-sensitive constant.
\end{defn}

\begin{defn}
The system $(X,f_{1,\infty})$ is $\mathcal{F}- \textit{sensitive}$ if there exists a positive $\delta>0$, such that for every $x\in X$ and every open neighbourhood $U$ of $x$, there exists $y\in U$ such that the pair $(x,y)$ is not $\mathcal{F}$-$\delta$-asymptotic, that is, the set $N_{f_{1,\infty}}(U,\delta)\in \mathcal{F}$,  where $\delta>0$ is an $\mathcal{F}$-sensitive constant. 
\end{defn}

Let $X$ be a topological space and $\pazocal{K}$($X)$ denote the hyperspace of all non-empty compact subsets of $X$ endowed with the \textit{Vietoris Topology}. A basis of open sets for Vietoris topology is given by following sets:
\bigskip

\noindent $< U_1, U_2, \ldots , U_k >$ = $\{K \in$ $\pazocal{K}$($X)$: $K \subset \bigcup_{i=1}^{k} U_{i}$ and $K\cap U_{i}$  $\ne \emptyset$, for each $i$ $\in \{1, 2, \ldots ,k\}$\},

\bigskip\noindent where $U_1, U_2, \ldots ,U_k$ are non-empty open subsets of $X$.

Given metric space $(X, d)$, a point $x \in X$ and $A \in \pazocal{K}(X)$, let $d(x, A)$ = $\inf \{d(x, a): a \in A \}$. For every $\epsilon > 0$, let  open $d$-ball in $X$ about $A$ and radius $\epsilon$ be given by  $B_{d}(A, \epsilon) = \{x \in X: d(x, A) < \epsilon\} = \bigcup_{a \in A} B_d(a, \epsilon)$, where $B_d(a, \epsilon)$ denotes the open ball in $X$ centred at $a$ and of radius $\epsilon$. The Hausdorff metric on $\pazocal{K}$($X)$  induced by $d$, denoted by $d_H$, is defined  as follows:

\ \ \ \ \ \ \ \ \ \ \ \ \ \ \ \ \ $d_H$($A, B) = \inf \{ \epsilon > 0: A \subseteq B_{d}(B, \epsilon) \  \text{and} \  B \subseteq B_{d}(A, \epsilon)\}$,

\bigskip\noindent where $A$, $B \in$ $\pazocal{K}$($X)$. We shall recall that the topology induced by the Hausdorff metric coincides with the Vietoris topology if and only if the space $X$ is compact. Also, for a compact metric space $X$ and  $A, B \in$ $\pazocal{K}$($X)$, we get that $d_H$($A, B) < \epsilon$ if and only if $A \subseteq B_d(B, \epsilon)$ and $B \subseteq B_d(A, \epsilon)$.
\\Let $\pazocal{F}(X)$ denote the set of all finite subsets of $X$. Under Vietoris topology, $\pazocal{F}(X)$ is dense in $\pazocal{K}(X)$ \cite{MR2665229, MR1269778}.
 Given a continuous function $f: X \to X$, it induces a continuous function $\overline{f}$: $\pazocal{K}$($X) \to$ $\pazocal{K}$($X)$ defined by $\overline{f}(K) = f(K)$, for every $K \in$ $\pazocal{K}$($X)$, where $f(K)$ = $\{f(k) : k\in K\}$. Note that continuity of $f$ implies continuity of $\overline{f}$.

\bigskip Let $(X, f_{1,\infty})$ be a non-autonomous discrete dynamical system and $\overline{f}_n$ be the function on $\pazocal{K}$($X)$, induced by $f_n$ on $X$, for every $n\in\mathbb{N}$. Then the sequence $\overline{f}_{1,\infty}$ = ($\overline{f}_1, \overline{f}_2$, $\ldots ,\overline{f}_n, \ldots )$ induces a non-autonomous discrete dynamical system ($\pazocal{K}$($X), \overline{f}_{1,\infty})$, where $\overline{f}_1^n = \overline{f}_n \circ \ldots \circ \overline{f}_2\circ \overline{f}_1$.  Note that $\overline{f}_1^n = \overline{f^n_1}$.
\\Let $(X,d_X)$ and $(Y,d_Y)$ be compact metric spaces. For non-autonomous discrete dynamical systems $(X,f_{1,\infty})$ and $(Y,g_{1,\infty})$, put $(f_{1,\infty}\times g_{1,\infty})= (h_{1,\infty}) = (h_1,h_2,\ldots,h_n,\ldots)$, where $h_n=f_n\times g_n$ , for each $n\in \mathbb{N}$. Thus, $(X\times Y, f_{1,\infty}\times g_{1,\infty})$ is a non-autonomous dynamical system, where $(X\times Y)$ is a compact metric space endowed with the product metric $d_{X\times Y}((x,y),(x',y'))= d_X(x,x')+d_Y(y,y')$. Here, $h_{1}^{n}=h_n\circ h_{n-1}\circ \cdots \circ h_2\circ h_1 = (f_n\times g_n)\circ (f_{n-1}\times g_{n-1})\circ \cdots \circ (f_2\times g_2)\circ (f_1\times g_1)$ \cite{MR3584171}.
\\Let $(X,d)$ be a compact metric space and let $C(X)$ denote the collection of continuous self-maps on $X$. For any $f,g\in C(X)$, the \textit{Supremum metric} is defined by \\$D(f,g)=\underset{x\in X}{sup}$ $d(f(x),g(x))$. It is easy to observe that a sequence $(f_n)$ in $C(X)$ converges to $f$ in $(C(X),D)$ if and only if $f_n$ converges to $f$ uniformly on $X$ and hence the topology generated by the \textit{Supremum metric} is called the topology of uniform convergence. 
\\ The $k$-th iterate is considered as $(X, f_{1,\infty}^{[k]})$, where $f_{1,\infty}^{[k]} = (f_{k(n-1)+ 1})_{n=1}^{\infty})$ and $f_{k(n-1)+ 1} = f_{kn-k+1+k-1}\circ f_{kn-1}\circ\cdots\circ f_{kn-k+1}$.
\\We recall the following results:
\begin{lem}[Corollary 2.2, \cite{MR3019971}]
Assume that the non-autonomous system $(X,f_{1,\infty})$ converges uniformly to a map $f$. Then for any $\epsilon>0$ and any $k\in \mathbb{N}$, there exists $\eta(\epsilon)>0$ and $N(k)\in \mathbb{N}$ such that for any pair $x,y \in X$ with $d(x,y)<\eta(\epsilon)$ and any $n\geq N(k)$, $d(f_n^k(x),f_n^k(y))<\epsilon/2$.
\end{lem}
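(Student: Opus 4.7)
The plan is to compare the finite compositions $f_n^k = f_{n+k-1}\circ\cdots\circ f_n$ with the iterate $f^k$ of the uniform limit, and then conclude via the triangle inequality. Since $X$ is compact and $f$ is continuous, $f^k$ is uniformly continuous on $X$, so there is $\eta(\epsilon)>0$ with $d(x,y)<\eta(\epsilon)$ implying $d(f^k(x),f^k(y))<\epsilon/4$. The bulk of the work will then be to show that for each fixed $k$, $f_n^k$ converges uniformly to $f^k$ as $n\to\infty$; this will allow the choice of $N(k)$ so that $D(f_n^k,f^k)<\epsilon/8$ for all $n\ge N(k)$.

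I would prove the uniform convergence $f_n^k\to f^k$ by induction on $k$. The base case $k=1$ is exactly the hypothesis. For the inductive step, write $f_n^k = f_{n+k-1}\circ f_n^{k-1}$ and, for every $x\in X$, estimate
\[
d(f_n^k(x),f^k(x)) \le d\bigl(f_{n+k-1}(f_n^{k-1}(x)),f(f_n^{k-1}(x))\bigr) + d\bigl(f(f_n^{k-1}(x)),f(f^{k-1}(x))\bigr).
\]
The first summand is bounded by $D(f_{n+k-1},f)$, which tends to zero uniformly in $x$ by hypothesis. The second summand is controlled by the uniform continuity of $f$ on the compact space $X$ together with the inductive hypothesis that $\sup_{x\in X} d(f_n^{k-1}(x),f^{k-1}(x))\to 0$. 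Taking the supremum over $x$ yields the inductive step.

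Once the uniform convergence is in hand, the conclusion is routine: for $n\ge N(k)$ and any $x,y\in X$ with $d(x,y)<\eta(\epsilon)$, the triangle inequality gives
\[
d(f_n^k(x),f_n^k(y)) \le d(f_n^k(x),f^k(x)) + d(f^k(x),f^k(y)) + d(f^k(y),f_n^k(y)) < \tfrac{\epsilon}{8}+\tfrac{\epsilon}{4}+\tfrac{\epsilon}{8} = \tfrac{\epsilon}{2}.
\]

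The main obstacle is the inductive step establishing $f_n^k\to f^k$ uniformly, since the compositions at stage $n$ involve genuinely different maps $f_n,\ldots,f_{n+k-1}$ rather than iterates of a single map; the argument works cleanly only because the intermediate images lie in the compact space $X$, where the uniform continuity of $f$ is available to compare the two trajectories uniformly in the starting point.
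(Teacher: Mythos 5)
The paper itself gives no proof of this lemma: it is quoted from Wu and Zhu \cite{MR3019971} (their Corollary 2.2), and your argument is essentially the standard proof from that source — first show by induction on $k$ that $\sup_{x\in X} d(f_n^k(x),f^k(x))\to 0$ as $n\to\infty$ (their Lemma 2.1), using $D(f_{n+k-1},f)\to 0$ for the outermost map and uniform continuity of $f$ on the compact space $X$ for the inner discrepancy, and then conclude with uniform continuity of $f^k$ and the triangle inequality. Your inductive step and the final estimate are correct. One remark: the $\eta$ you construct comes from uniform continuity of $f^k$ and hence depends on $k$ as well as $\epsilon$, while the notation $\eta(\epsilon)$ in the statement suggests $k$-independence; that stronger reading is actually false in general (take $f_n=f$ for all $n$ with $f$ a sensitive map such as the tent map, so that $f_n^k=f^k$ and no single $\eta$ can work for all $k$). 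The quantifier order ``for any $\epsilon>0$ and any $k\in\mathbb{N}$ there exist\ldots'' permits the dependence you obtain, and this is also all that is used later in the paper (in Theorem 4.3 one simply takes the minimum of $\eta$ over the finitely many exponents $0\le i\le k$), so your proof establishes the lemma in the form in which it is actually needed.
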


\begin{lem}[Corollary 1, \cite{MR2}]
Let $(X,f_{1,\infty})$ be a N.D.S generated by a family $f_{1,\infty}$ and let $f$ be any continuous self map on $X$. If the family $f_{1,\infty}$ commutes with $f$ then for any $x\in X$ and any $k\in\mathbb{N}$, $d(f_{1}^{n+k}(x),f^{k}(f_{1}^{n}(x))) \leq \sum_{i=1}^k D(f_{i+1},f)$.
\end{lem}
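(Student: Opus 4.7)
The plan is to telescope between $f_1^{n+k}(x)$ and $f^k(f_1^n(x))$ along a carefully chosen sequence of interpolating points, so that each step in the telescope reduces to a single supremum-distance comparison between some $f_j$ and $f$. Concretely, for $i = 0, 1, \ldots, k$, I would set
\[ z_i = f^i\bigl(f_1^{n+k-i}(x)\bigr), \]
so that $z_0 = f_1^{n+k}(x)$ and $z_k = f^k(f_1^n(x))$. The triangle inequality then gives $d(z_0, z_k) \leq \sum_{i=0}^{k-1} d(z_i, z_{i+1})$, and it suffices to bound each term of this telescoping sum by a single $D$-discrepancy.

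To estimate a typical term, I would expand $f_1^{n+k-i}(x) = f_{n+k-i}\bigl(f_1^{n+k-i-1}(x)\bigr)$ and invoke the hypothesis: since each $f_j$ commutes with $f$, so does every iterate $f^i$. Consequently,
\[ z_i = f^i\bigl(f_{n+k-i}(f_1^{n+k-i-1}(x))\bigr) = f_{n+k-i}\bigl(f^i(f_1^{n+k-i-1}(x))\bigr), \]
while $z_{i+1} = f\bigl(f^i(f_1^{n+k-i-1}(x))\bigr)$. Writing $w_i = f^i(f_1^{n+k-i-1}(x))$, this gives
\[ d(z_i, z_{i+1}) = d\bigl(f_{n+k-i}(w_i), f(w_i)\bigr) \leq D(f_{n+k-i}, f). \]
Summing over $i$ from $0$ to $k-1$ produces the stated bound as a sum of $k$ supremum-distance terms indexed by the relevant block of maps between levels $n$ and $n+k$.

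The main obstacle is recognizing that a direct induction on $k$ using the triangle inequality only at the outermost layer fails: such an approach would eventually require passing a previously obtained bound on $d(f_1^{n+k}(x), f^k(f_1^n(x)))$ through another application of $f$ or some $f_j$, which demands a non-expansiveness property that is nowhere in the hypothesis. The commutativity assumption is precisely what allows one to pull an outer iterate $f^i$ past an interior $f_j$, so that every telescoping step collapses to a comparison of $f_j$ against $f$ at a single point $w_i$ and is controlled cleanly by $D(f_j, f)$. Any interpolation scheme that does not exploit this swap would still leave the estimate stuck behind uncontrolled inner iterates.
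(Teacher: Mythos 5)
Your telescoping argument is correct and is the standard proof of this estimate; the paper itself gives no argument, since it simply quotes the result from \cite{MR2}. The interpolation $z_i=f^i\bigl(f_1^{n+k-i}(x)\bigr)$, the use of commutativity to pull $f^i$ past $f_{n+k-i}$, and the bound $d(z_i,z_{i+1})\le D(f_{n+k-i},f)$ are all sound, and you are right that commutativity (rather than any non-expansiveness of $f$) is what makes each telescoping step collapse to a single supremum-distance comparison.

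One point needs correcting, though: the sum your argument actually produces is $\sum_{i=0}^{k-1}D(f_{n+k-i},f)=\sum_{i=1}^{k}D(f_{n+i},f)$, i.e.\ the block of maps $f_{n+1},\ldots,f_{n+k}$, whereas the statement as printed reads $\sum_{i=1}^{k}D(f_{i+1},f)$, which does not depend on $n$. These are not the same, and you should not assert that your sum ``produces the stated bound''; rather, the printed indexing is evidently a misprint. As literally written the inequality is false: take $f$ and every $f_j$ to be the identity except $f_{n+1}=g\neq \mathrm{id}$ (which commutes with the identity); then for $n>k$ the right-hand side is $0$ while the left-hand side is $d(g(x),x)>0$. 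Moreover, the way the lemma is invoked in Theorem 4.2 --- choosing $n$ large so that $d(f_1^{n+k}(x),f^k(f_1^n(x)))$ is small for all $k$, using $\sum_{i=1}^{\infty}D(f_i,f)<\infty$ --- only works with the $n$-dependent tail bound $\sum_{i=1}^{k}D(f_{n+i},f)$, which is exactly what your proof establishes. So your argument proves the intended statement; just flag the index discrepancy instead of identifying the two sums.
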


\section{Relations of $\mathcal{F}$-sensitivity with some other stronger forms of sensitivity.}
In this section, we present our results about $\mathcal{F}$-sensitivity and weakly$\mathcal{F}$-sensitivity of the non-autonomous systems and discuss a condition under which these two are equivalent. We also give an example to show that $\mathcal{F}$-sensitivity need not imply syndetic sensitivity and cofinite sensitivity. 
\begin{thm}
Let $(X,f_{1,\infty})$ be a non-autonomous system and $\mathcal{F}$ be a filterdual. Then $(X,f_{1,\infty})$ is weakly $\mathcal{F}$-sensitive if and only if it is $\mathcal{F}$-sensitive.
\end{thm}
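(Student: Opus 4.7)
The plan is to prove the two directions separately, with the backward implication being immediate and the forward implication being the substantive content that uses the filterdual hypothesis.

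For the easy direction, I would observe that if $(X,f_{1,\infty})$ is $\mathcal{F}$-sensitive with constant $\delta$, then given any nonempty open set $U$, I can pick any $x\in U$ and apply $\mathcal{F}$-sensitivity (with $U$ as the neighbourhood) to produce $y\in U$ with $\{n:d(f_1^n(x),f_1^n(y))>\delta\}\in\mathcal{F}$. This immediately gives the weak $\mathcal{F}$-sensitivity condition with the same constant $\delta$, and the filterdual assumption is not needed here.

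For the main direction, I would assume weak $\mathcal{F}$-sensitivity with constant $\delta$ and aim to show $\mathcal{F}$-sensitivity with constant $\delta/2$. Fix $x\in X$ and an open neighbourhood $U$ of $x$. Applying weak $\mathcal{F}$-sensitivity to $U$ produces points $a,b\in U$ with $T:=\{n:d(f_1^n(a),f_1^n(b))>\delta\}\in\mathcal{F}$. By the triangle inequality, $T\subseteq A\cup B$ where
\[
A=\{n:d(f_1^n(x),f_1^n(a))>\delta/2\}, \qquad B=\{n:d(f_1^n(x),f_1^n(b))>\delta/2\}.
\]
Since $\mathcal{F}$ is hereditary upwards, $A\cup B\in\mathcal{F}$. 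Here is where the filterdual hypothesis does the essential work: unpacking $k\mathcal{F}\cdot k\mathcal{F}\subseteq k\mathcal{F}$ via $F\in k\mathcal{F}\iff\mathbb{Z}^+\setminus F\notin\mathcal{F}$ shows that $\mathcal{F}$ is closed under supersets of one of the pieces of any union, i.e. $A\cup B\in\mathcal{F}$ forces $A\in\mathcal{F}$ or $B\in\mathcal{F}$. Choosing $y=a$ in the first case and $y=b$ in the second produces the required $y\in U$ with $N_{f_{1,\infty}}(\{x,y\},\delta/2)\in\mathcal{F}$, establishing $\mathcal{F}$-sensitivity with constant $\delta/2$.

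The only subtle step is the unpacking of the filterdual condition into the ``prime'' property $A\cup B\in\mathcal{F}\Rightarrow A\in\mathcal{F}\text{ or }B\in\mathcal{F}$; the rest is a triangle-inequality argument. I would write that equivalence explicitly as a brief lemma (or inline) before invoking it, since this is the structural content that makes $\mathcal{F}$-sensitivity and weak $\mathcal{F}$-sensitivity collapse for filterduals. Everything else (choice of $y\in U$, verification that the selected set lies in $\mathcal{F}$) follows routinely once this observation is in hand.
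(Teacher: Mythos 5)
Your proof is correct, and it reaches the paper's conclusion by a dual (direct) route rather than the paper's contradiction argument. In the substantive direction you split $\{n:d(f_1^n(a),f_1^n(b))>\delta\}\subseteq A\cup B$ by the triangle inequality and invoke the partition-regularity property $A\cup B\in\mathcal{F}\Rightarrow A\in\mathcal{F}$ or $B\in\mathcal{F}$, which is exactly the dual restatement of $k\mathcal{F}\cdot k\mathcal{F}\subseteq k\mathcal{F}$; your unpacking of that equivalence is valid (if $A\notin\mathcal{F}$ and $B\notin\mathcal{F}$ then $A^c,B^c\in k\mathcal{F}$, so $(A\cup B)^c\in k\mathcal{F}$, i.e.\ $A\cup B\notin\mathcal{F}$). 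The paper instead assumes $\mathcal{F}$-sensitivity fails at some $x$ with neighbourhood $U$, notes $N_{f_{1,\infty}}((x,y),\overline{V_{\delta}})\in k\mathcal{F}$ for all $y\in U$, intersects two such sets using the filter property of $k\mathcal{F}$, and applies the triangle inequality to get $N_{f_{1,\infty}}((a,b),\overline{V_{2\delta}})\in k\mathcal{F}$ for all $a,b\in U$, contradicting weak $\mathcal{F}$-sensitivity. The two arguments use the same two ingredients (filterdual plus triangle inequality), but yours has the advantage of being direct and of exhibiting an explicit sensitivity constant $\delta/2$ from a weak sensitivity constant $\delta$, whereas the paper loses a factor in the other direction (contradiction only at $2\delta$), which is why its converse needs the slightly awkward ``for each $\delta>0$'' phrasing. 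Your plan to state the prime property as a short lemma before using it is sensible, since that is the only point where the filterdual hypothesis enters; the easy direction, as you note, needs no hypothesis on $\mathcal{F}$ and coincides with the paper's.
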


\begin{proof}
First suppose that $(X,f_{1,\infty})$ is $\mathcal{F}$-sensitive, with constant of $\mathcal{F}$-sensitivity $\delta>0$. Let $U$ be any open set in $X$. For any $x\in U$, since $U$ is an open neighbourhood of $x$, by $\mathcal{F}$-sensitivity of $(X,f_{1,\infty})$, there exists a $y\in U$ such that $(x,y)$ is not $\mathcal{F}-\delta$-asymptotic. Therefore, $U$ contains a pair $(x,y)$ which is not $\mathcal{F}-\delta$-asymptotic and hence $(X,f_{1,\infty})$ is weakly $\mathcal{F}$-sensitive.
\\Conversely, suppose that $(X,f_{1,\infty})$ is weakly $\mathcal{F}$-sensitive, with constant of weakly$\mathcal{F}$-sensitivity $\delta>0$. If $(X,f_{1,\infty})$ is not $\mathcal{F}$-sensitive, then for each $\delta>0$, there exist $x\in X$ and an open neighbourhood $U$ of $x$ such that $\{n\in\mathbb{N}: d(f_1^n(x),f_1^n(y))>\delta\} \notin \mathcal{F}$, for every $y\in X$. Therefore, $N_{f_{1,\infty}}((x,y),X\times X \setminus \overline{V_{\delta}})\notin \mathcal{F}$, for every $y\in X$ and hence
$\mathbb{N}\setminus N_{f_{1,\infty}}((x,y),\overline{V_{\delta}})\notin \mathcal{F}$ which implies $
N_{f_{1,\infty}}((x,y),\overline{V_{\delta}})\in \mathit{k}\mathcal{F}$.
Since $\mathcal{F}$ is filterdual, using triangle inequality, we get that $N_{f_{1,\infty}}((a,b),\overline{V_{2\delta}})\in \mathit{k}\mathcal{F}$. Therefore, $\mathbb{N}\setminus N_{f,\infty}((a,b),X\times X\setminus\overline{V_{2\delta}})\in \mathit{k}\mathcal{F}$ and hence $N_{f_{1,\infty}}((x,y),X\times X\setminus \overline{V_{2\delta}})\notin \mathcal{F}$ which contradicts that $(X,f_{1,\infty})$ is weakly $\mathcal{F}$-sensitive.
\end{proof}

\begin{rmk}
If $(X,f_{1,\infty})$ is $\mathcal{F}$-sensitive, then $(X,f_{1,\infty})$ is weakly $\mathcal{F}$-sensitive, even when $\mathcal{F}$ is not filterdual.
\end{rmk}

\begin{lem}
Let $(X,f_{1,\infty})$ be a N.D.S. A Furstenberg family $\mathcal{F}$ is compatible with system $(X,f_{1,\infty})$ if and only if the $\mathit{k}\mathcal{F}$-attaching set of V is a $F_{\sigma}$ set of $X$ for each closed subset $V$ of $X$. 
\end{lem}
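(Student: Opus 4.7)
The plan is to reduce the equivalence to a simple set-theoretic duality between $\mathcal{F}(\cdot)$ and $k\mathcal{F}(\cdot)$, combined with the De Morgan duality between $G_\delta$ and $F_\sigma$ sets.

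First I would record the key identity: for any subset $A \subseteq X$ and any $x \in X$, the sets $N_{f_{1,\infty}}(x,A) = \{n : f_1^n(x) \in A\}$ and $N_{f_{1,\infty}}(x,X \setminus A) = \{n : f_1^n(x) \notin A\}$ partition $\mathbb{Z}^+$. Using the characterization
\[
k\mathcal{F} = \{F \in \mathcal{P} : \mathbb{Z}^+ \setminus F \notin \mathcal{F}\}
\]
given in the preliminaries, this yields
\[
x \in k\mathcal{F}(A) \iff N_{f_{1,\infty}}(x,A) \in k\mathcal{F} \iff N_{f_{1,\infty}}(x,X \setminus A) \notin \mathcal{F} \iff x \notin \mathcal{F}(X \setminus A).
\]
Hence $k\mathcal{F}(A) = X \setminus \mathcal{F}(X \setminus A)$ for every subset $A$ of $X$.

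Next I would exploit this identity in both directions. Assume $\mathcal{F}$ is compatible, and let $V$ be any closed subset of $X$. Then $X \setminus V$ is open, so by compatibility $\mathcal{F}(X \setminus V)$ is a $G_\delta$ set. By the identity, $k\mathcal{F}(V) = X \setminus \mathcal{F}(X \setminus V)$ is the complement of a $G_\delta$, hence an $F_\sigma$ set. For the converse, assume that $k\mathcal{F}(V)$ is $F_\sigma$ for every closed $V \subseteq X$, and let $U$ be any open subset of $X$. Applying the identity to $A = X \setminus U$ gives $\mathcal{F}(U) = X \setminus k\mathcal{F}(X \setminus U)$, which is the complement of an $F_\sigma$ set, hence a $G_\delta$ set; thus $\mathcal{F}$ is compatible with $(X,f_{1,\infty})$.

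There is no serious obstacle here: the only content is the set-theoretic duality $k\mathcal{F}(A) = X \setminus \mathcal{F}(X \setminus A)$, after which the topological half is a one-line application of De Morgan's laws. The mildly delicate point to double-check is the use of $k(k\mathcal{F}) = \mathcal{F}$ (already noted in the preliminaries) when reading off the converse, so that the hypothesis on closed sets fully recovers the condition on open sets.
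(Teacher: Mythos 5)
Your proposal is correct and takes essentially the same route as the paper: both rest on the duality $k\mathcal{F}(V)=X\setminus\mathcal{F}(X\setminus V)$, derived from the definition of the dual family, followed by complementation between $G_{\delta}$ and $F_{\sigma}$ sets. If anything, your write-up is slightly more careful in spelling out the converse direction (that every open $U$ is the complement of a closed set), and the appeal to $k(k\mathcal{F})=\mathcal{F}$ you flag is not actually needed, since you only rearrange the same identity.
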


\begin{proof}
Suppose $V$ is a closed subset of $X$, then $x\in\mathit{k}\mathcal{F}$
\begin{align*} & \iff N_{f_{1,\infty}}(x,V)\in \mathit{k}\mathcal{F} \\
& \iff \mathbb{N} \setminus N_{f_{1,\infty}}(x,X\setminus V) \in \mathit{k}\mathcal{F} \\
& \iff N_{f_{1,\infty}}(x,X\setminus V) \notin \mathcal{F} \\
& \iff x \notin \mathcal{F}(X\setminus V). \end{align*} Therefore, we have that $\mathit{k}\mathcal{F}(V)=
X\setminus \mathcal{F}(X\setminus V)$. Hence, $\mathit{k}\mathcal{F}(V)$ is an $F_{\sigma}$-set if and only if $\mathcal{F}(X\setminus V)$ is a $G_{\delta}$-set which is true by definition.

\end{proof}

The following result gives characterizations for the $\mathcal{F}$-sensitivity of non-autonomous system $(X,f_{1,\infty})$.
\begin{thm}
Let $(X,f_{1,\infty})$ be a N.D.S. If the Furstenberg family $\mathcal{F}$ is a filterdual and is compatible with the system $(X\times X, f_{1,\infty}\times f_{1,\infty})$, then the following are equivalent.
\begin{enumerate}
\item $(X,f_{1,\infty})$ is weakly $\mathcal{F}$-sensitive.
\item There exists a positive $\delta$ such that for every $x\in X$, $Asym_{\delta}(\mathcal{F})(x)$ is a first category subset of $X$.
\item There exists a positive $\delta$ such that $Asym_{\delta}(\mathcal{F})$ is a first category subset of $X\times X$.
\item There exists a positive $\delta$ such that $\mathcal{F}(X\times X \setminus \overline{V_{\delta}})$ is dense in $X\times X$.
\item $(X,f_{1,\infty})$ is $\mathcal{F}$-sensitive.
\end{enumerate}
\end{thm}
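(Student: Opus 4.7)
The plan is to close the loop of implications
\begin{align*}
(5) \Rightarrow (2) \Rightarrow (4) \Rightarrow (3) \Rightarrow (1) \Leftrightarrow (5),
\end{align*}
where $(1)\Leftrightarrow (5)$ is immediate from Theorem 3.1 using that $\mathcal{F}$ is filterdual. The common thread running through the remaining steps is the identity $\mathcal{F}(X\times X\setminus \overline{V_\delta}) = (X\times X)\setminus \mathit{k}\mathcal{F}(\overline{V_\delta}) = (X\times X)\setminus Asym_\delta(\mathcal{F})$, together with Lemma 3.1 applied to $(X\times X, f_{1,\infty}\times f_{1,\infty})$: the compatibility hypothesis guarantees that $Asym_\delta(\mathcal{F})$ is an $F_\sigma$ subset of $X\times X$, and in particular each horizontal slice $Asym_\delta(\mathcal{F})(x)$ is $F_\sigma$ in $X$. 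Since $X$ and $X\times X$ are compact metric, Baire's theorem is available throughout.

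For $(5) \Rightarrow (2)$, I would start with an $\mathcal{F}$-sensitivity constant $\delta$ and show that $Asym_{\delta/2}(\mathcal{F})(x)$ is first category in $X$ for every $x \in X$. Writing $Asym_{\delta/2}(\mathcal{F}) = \bigcup_k F_k$ as an $F_\sigma$, suppose for contradiction some slice $F_k(x_0)$ has nonempty open interior $W$; pick $z \in W$ and apply the $\mathcal{F}$-sensitivity hypothesis to $z$ and its neighbourhood $W$ to obtain $w \in W$ with $\{n \in \mathbb{N} : d(f_1^n(z),f_1^n(w)) > \delta\} \in \mathcal{F}$. Both $(x_0, z)$ and $(x_0, w)$ lie in $Asym_{\delta/2}(\mathcal{F})$, so both $\{n : d(f_1^n(x_0),f_1^n(z)) \leq \delta/2\}$ and $\{n : d(f_1^n(x_0), f_1^n(w)) \leq \delta/2\}$ belong to $\mathit{k}\mathcal{F}$. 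The filterdual property $\mathit{k}\mathcal{F}.\mathit{k}\mathcal{F} \subseteq \mathit{k}\mathcal{F}$ puts their intersection in $\mathit{k}\mathcal{F}$, and the triangle inequality then forces $\{n : d(f_1^n(z),f_1^n(w)) \leq \delta\} \in \mathit{k}\mathcal{F}$, contradicting the previous sentence. Hence each $F_k(x)$ is closed with empty interior, so nowhere dense, and $(2)$ holds with constant $\delta/2$.

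The remaining links are short Baire-category arguments. For $(2) \Rightarrow (4)$, Baire on $X$ gives that each slice $X \setminus Asym_\delta(\mathcal{F})(x)$ is dense, so any basic open box $U \times V$ in $X \times X$ contains a pair $(x,y)$ with $y \notin Asym_\delta(\mathcal{F})(x)$, i.e.\ $(x,y) \in \mathcal{F}(X\times X \setminus \overline{V_\delta})$. For $(4) \Rightarrow (3)$ one uses only that an $F_\sigma$ set with dense complement must have each closed piece nowhere dense (its interior would be an open set avoiding the dense complement). For $(3) \Rightarrow (1)$, Baire on $X\times X$ gives $(X\times X) \setminus Asym_\delta(\mathcal{F})$ dense, so every product $U \times U$ contains a pair $(x,y)$ that is not $\mathcal{F}$-$\delta$-asymptotic. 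The main obstacle is the step $(5) \Rightarrow (2)$, where the local-at-$x$ nature of $\mathcal{F}$-sensitivity must be upgraded to a global first-category statement for every slice; this is precisely where the filterdual hypothesis enters, allowing two $\mathit{k}\mathcal{F}$-memberships to be combined via triangle inequality at the expense of halving the sensitivity constant.
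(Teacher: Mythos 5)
Your proposal is correct and follows essentially the same route as the paper: compatibility together with Lemma 3.1 to realize $Asym_{\delta}(\mathcal{F})$ as an $F_{\sigma}$ subset of $X\times X$, Baire category on the compact spaces $X$ and $X\times X$, the identity $\mathcal{F}(X\times X\setminus \overline{V_{\delta}})=(X\times X)\setminus Asym_{\delta}(\mathcal{F})$, the filterdual-plus-triangle-inequality device for combining two $\mathit{k}\mathcal{F}$-memberships, and $(1)\Leftrightarrow(5)$ quoted from Theorem 3.1. The differences are only organizational: you close the cycle as $(5)\Rightarrow(2)\Rightarrow(4)\Rightarrow(3)\Rightarrow(1)\Leftrightarrow(5)$ rather than the paper's $(1)\Rightarrow(2)\Rightarrow(3)\Rightarrow(1)$ plus $(2)\Rightarrow(4)\Rightarrow(1)$, and your constant bookkeeping (running the asymptotic relation at $\delta/2$ against a sensitivity constant $\delta$) is in fact cleaner than the paper's passage to $2\delta$.
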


\begin{proof}
(1)$\implies$(2) We have that $\mathcal{F}$ is compatible with the system $(X\times X, f_{1,\infty}\times f_{1,\infty})$, so by Lemma 3.1, $Asym_{\delta}(\mathcal{F})$ is an $F_{\sigma}$ set of $X\times X$.

Suppose $Asym_{\delta}(\mathcal{F})=\bigcup_{i=1}^{\infty}A_n$, where every $A_i$ is a closed subset of $X\times X$, then $Asym_{\delta}(\mathcal{F})(x)=\bigcup_{i=1}^{\infty}A_n(x)$.
Suppose that for each $\delta>0$, there exists $x\in X$ such that $Asym_{\delta}(\mathcal{F})(x)$ is not of first category, then by Baire's Category Theorem there exists an open subset $U$ of $X$ such that $U\subset A_i(x)$, for some $i$. Hence, for each $y\in U$, $N_{f_{1,\infty}}((x,y),\overline{V_{\delta}})\in \mathit{k}\mathcal{F}$ and since $\mathcal{F}$ is filterdual, using triangle inequality we get that $N_{f_{1,\infty}}((a,b),\overline{V_{2\delta}})\in \mathit{k}\mathcal{F}$, for all $a,b \in U$. Then, $\mathbb{N}\setminus  N_{f_{1,\infty}}((a,b),X\times X\setminus  \overline{V_{2\delta}})\in \mathit{k}\mathcal{F}$. Therefore $N_{f_{1,\infty}}((a,b),X\times X\setminus \overline{V_{2\delta}})\notin \mathcal{F}$, which contradicts that $(X,f_{1,\infty})$ is weakly $\mathcal{F}$-sensitive.\\
(2)$\implies$(3) We know that $Asym_{\delta}(\mathcal{F})=\mathit{k}\mathcal{F}(\overline{V_{\delta}})$, therefore by Lemma 3.1, we get that $Asym_{\delta}({\mathcal{F}})$ is an $F_{\sigma}$ set. Suppose that $Asym_{\delta}(\mathcal{F})=\bigcup_{i=1}^{\infty}A_i$, where each $A_i$ is a closed subset of $X\times X$. Then, $Asym_{\delta}(\mathcal{F})(x)=\bigcup_{i=1}^{\infty}A_i(x)$. If $Asym_{\delta}(\mathcal{F})$ is not first category, then by Baire's Category Theorem, some $A_i$ has non-empty interior. If $U\times V \subset A_i$ and $x\in U$, then $V\subset A_i$. Hence, $Asym_{\delta}(\mathcal{F})(x)$ is not first category.\\
(3)$\implies$(1) If $(X,f_{1,\infty})$ is not weakly $\mathcal{F}$-sensitive, then for every $\delta>0$, there exists an open set $U$ in $X$ such that $N_{f_{1,\infty}}((x,y),X\times X \setminus \overline{V_{\delta}})\notin \mathcal{F}$, for each $(x,y)\in U\times U$, that is, $\mathbb{N}\setminus N_{f_{1,\infty}}((x,y),\overline{V_{\delta}})\notin \mathcal{F}$. Therefore, $N((x,y),\overline{V_{\delta}})\in \mathit{k}\mathcal{F}$ which implies $(x,y)\in Asym_{\delta}(\mathcal{F})$. Hence, $U\times U\subset Asym_{\delta}(\mathcal{F})$. Thus, $Asym_{\delta}(\mathcal{F})$ is not of first category.\\
\\Thus, we have proved that (1), (2) and (3) are equivalent.
\\(2)$\implies$(4) We first observe that $\mathcal{F}(X\times X\setminus \overline{V_{\delta}})=X\times X\setminus\mathit{k}\mathcal{F}(\overline{V_{\delta}})=X\times X\setminus Asym_{\delta}(\mathcal{F})$. Since, complement of a first category set is dense in a Baire's Space, we get the required result.
\\(4)$\implies$(1) Let $U$ be any open set in $X$. Since $\mathcal{F}(X\times X\setminus \overline{V_{\delta}})$ is dense in $X\times X$, therefore we have $U\times U\bigcap  \mathcal{F}(X\times X\setminus \overline{V_{\delta}})$ is non-empty which implies $U\times U\bigcap X\times X\setminus  Asym_{\delta}(\mathcal{F})$ is non-empty. Thus, there exists $(x,y)\in U\times U$ such that $(x,y)\notin Asym_{\delta}(\mathcal{F})$. Hence, $(X,f_{1,\infty})$is weakly $\mathcal{F}$-sensitive.
\\(1)$\iff$(5) follows directly from Theorem 3.1.
\end{proof}

The following example justifies that an $\mathcal{F}$-sensitive non-autonomous system need not be syndetic sensitive and cofinite sensitive.
\begin{exm} \end{exm}
Let $\mathbb{Z}$ denote the set of all integers, $\cal{A}$ = $\{0,1\}$ and $\Sigma_2$=$\cal{A}^\mathbb{Z}$ = $\{( \ldots, x_{-2}, x_{-1}$, \fbox{$x_0$}, $x_1, \ldots): x_i \in \{0,1\}, \ \text{for every} \  i  \in \mathbb{Z}\}$ with product metric  \[\rho(x,y) = \sum_{j=-\infty}^{\infty}\frac{|x_j-y_j|}{2^{|j|}} \]  
for any pair $ x= ( \ldots, x_{-2}, x_{-1}$, \fbox{$x_0$}, $ x_1, \ldots)$;  $y = ( \ldots, y_{-2}, y_{-1}$, \fbox{$y_0$}, $ y_1, \ldots) \in \Sigma_2$. The space $(\Sigma_2, \rho)$ is called the \emph{the two sided symbolic space}. Define $\sigma: \Sigma_2 \to \Sigma_2$ by $\sigma(x) = ( \ldots, x_{-2}, x_{-1}, x_0, \fbox{$x_1$}, x_2, \ldots)$, where $x = ( \ldots, x_{-2}, x_{-1}, $ \fbox{$x_0$}, $ x_1, x_2 \ldots) \in \Sigma_2$, then $\sigma$ is a homeomorphism and is called the \emph{shift map} on $\Sigma_2$.
\\Consider the non-autonomous system $(\Sigma_2, f_{1,\infty})$, where $f_{1, \infty}$  given by 
\begin{align} f_{1,\infty} :=  \{ I, I, \sigma, \sigma^{-1}, I, I, I, I, \sigma^{2}, \sigma^{-2}, \ldots,  \underbrace{I, I, \ldots, I}_{\text{$(2)^{r}$-times}},  \sigma^{r}, \sigma^{-r}, \ldots \}  \nonumber\end{align}
Let $\mathcal{F}$ be the Furstenberg family of all infinite subsets of $\mathbb{Z^+}$. For any $x\in \Sigma_2$, and any open neighbourhood $U$ of $x$, we will get a $y\in U$ such that $(x,y)$ is not an $\mathcal{F}$-asymptotic pair for $(\Sigma_2, f_{1,\infty})$.
Therefore, $(\Sigma_2, f_{1,\infty})$ is $\mathcal{F}$-sensitive.
\\Note that in the above example, the non-autonomous system $(\Sigma_2, f_{1,\infty})$ is not cofinite sensitive because for any open set $U\in \Sigma_2$, the compliment of the set $N_{f_{1,\infty}}(U)$ is not finite.\\ Also, this system is not syndetically sensitive because the set $N_{f_{1,\infty}}(U)$ has unbounded gaps, for any open set $U\in \Sigma_2$.
\\Thus, $\mathcal{F}$-sensitivity need not imply cofinite sensitivity and syndetic sensitivity.

\section{$\mathcal{F}$-sensitivity for different non-autonomous dynamical systems}
In this section,  we investigate $\mathcal{F}$-sensitivity and weakly $\mathcal{F}$-sensitivity for different non-autonomous systems. In the following theorem, we discuss the relation between the $\mathcal{F}$-sensitivity of a non-autonomous system $(X,f_{1,\infty})$ and its hyperspace $(\pazocal{K}(X),\overline{f_{1,\infty}})$.

\begin{thm}
Let $(X,f_{1,\infty})$ be a non-autonomous dynamical system and let $\mathit{k}\mathcal{F}$ be filterdual. Then, $(X,f_{1,\infty})$ is $\mathcal{F}$- sensitive if and only $(\pazocal{K}(X),\overline{f_{1,\infty}})$ is $\mathcal{F}$-sensitive. 
\end{thm}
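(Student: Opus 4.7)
Assume $(X,f_{1,\infty})$ is $\mathcal{F}$-sensitive with constant $\delta>0$. I will argue that the same $\delta$ (or $\delta/2$) serves as an $\mathcal{F}$-sensitivity constant for the hyperspace. Given $K\in\pazocal{K}(X)$ and a basic Vietoris open neighborhood $\mathcal{V}=\langle U_1,\ldots,U_k\rangle$ of $K$, pick $x\in K\cap U_1$ and apply $\mathcal{F}$-sensitivity at $x$ with respect to the open set $U_1\subseteq X$ to obtain $y\in U_1$ with $F:=\{n\in\mathbb{N}:d(f_1^n(x),f_1^n(y))>\delta\}\in\mathcal{F}$. Set $L=K\cup\{y\}$ and verify $L\in\mathcal{V}$ (since $L\subseteq\bigcup U_i$ and $L\cap U_i\supseteq K\cap U_i\neq\emptyset$ for $i\geq 2$, while $L\cap U_1\ni y$). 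Because $\overline{f_1^n}(L)=\overline{f_1^n}(K)\cup\{f_1^n(y)\}$ and $\overline{f_1^n}(K)\subseteq\overline{f_1^n}(L)$, we have $d_H(\overline{f_1^n}(K),\overline{f_1^n}(L))=d(f_1^n(y),\overline{f_1^n}(K))$, and the task is to lower-bound this quantity by a constant proportional to $\delta$ for $n$ in some $\mathcal{F}$-set contained in $F$.

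\textbf{Reverse implication.} Assume $(\pazocal{K}(X),\overline{f_{1,\infty}})$ is $\mathcal{F}$-sensitive with constant $\delta'>0$, and note that the hypothesis ``$\mathit{k}\mathcal{F}$ filterdual'' unravels to $\mathcal{F}\cdot\mathcal{F}\subseteq\mathcal{F}$, so $\mathcal{F}$ is closed under finite intersections. Given $x\in X$ and an open neighborhood $U\ni x$, apply hyperspace sensitivity to the singleton $\{x\}\in\pazocal{K}(X)$ and the Vietoris neighborhood $\langle U\rangle=\{L\in\pazocal{K}(X):L\subseteq U\}$ to obtain $L\subseteq U$ with $G:=\{n:d_H(\{f_1^n(x)\},\overline{f_1^n}(L))>\delta'\}\in\mathcal{F}$. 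Using density of finite subsets in $\pazocal{K}(X)$, I may take $L=\{y_1,\ldots,y_m\}$ finite, whence $d_H(\{f_1^n(x)\},\overline{f_1^n}(L))=\max_i d(f_1^n(x),f_1^n(y_i))$ and so $G=\bigcup_{i=1}^m G_i$, where $G_i=\{n:d(f_1^n(x),f_1^n(y_i))>\delta'\}$. I would then use the filter property of $\mathcal{F}$, via a dual argument on $\mathit{k}\mathcal{F}$, to extract some $y_i\in U$ for which $G_i\in\mathcal{F}$, establishing $\mathcal{F}$-sensitivity of $(X,f_{1,\infty})$ at $x$.

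\textbf{Main obstacles.} The Hausdorff lower bound in the forward direction is the principal technical hurdle: sensitivity at the single point $x$ gives $d(f_1^n(x),f_1^n(y))>\delta$, but $d(f_1^n(y),\overline{f_1^n}(K))$ could still be small because $f_1^n(y)$ might land close to $f_1^n(z)$ for some other $z\in K$. I expect to patch this by a refined construction: cover $K$ by finitely many small balls around points $x_1,\ldots,x_p\in K$, apply $\mathcal{F}$-sensitivity at each $x_j$ to obtain $y_j$ and a set $F_j\in\mathcal{F}$, intersect via $\mathcal{F}\cdot\mathcal{F}\subseteq\mathcal{F}$ to get a common $F=\bigcap_j F_j\in\mathcal{F}$, and then arrange the $y_j$'s inside $L$ so that the Hausdorff inequality persists for $n\in F$. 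A dual subtlety arises in the reverse direction, since a union $\bigcup G_i\in\mathcal{F}$ need not force any single $G_i\in\mathcal{F}$ for a general filter; the argument must therefore exploit either the freedom to shrink $\langle U\rangle$ (forcing $L$ to concentrate near $\{x\}$) or iterate hyperspace sensitivity with successive witnesses and use the closure of $\mathcal{F}$ under finite intersection to pin down a single $y_i$.
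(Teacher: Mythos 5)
Your outline follows the same broad strategy as the paper (singleton trick for the hyperspace-to-base direction; finite approximation plus closure of $\mathcal{F}$ under intersections, which is indeed what ``$\mathit{k}\mathcal{F}$ filterdual'' means, for the other direction), but in the base-to-hyperspace direction you stop exactly at the step that carries all the difficulty, and the plan you sketch for it cannot work as stated. If you only \emph{add} points to $K$, i.e.\ take $L\supseteq K$, then $d_H(\overline{f_1^n}(K),\overline{f_1^n}(L))=\max_{y\in L}d(f_1^n(y),f_1^n(K))$, and sensitivity only separates $f_1^n(y_j)$ from the image of the single centre $x_j$, never from the whole set $f_1^n(K)$; no choice or ``arrangement'' of the $y_j$'s repairs this, since each added image point may well be close to the image of some other point of $K$. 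The paper's resolution is different in kind: first reduce to finite sets $A=\{x_1,\dots,x_k\}$ using density of $\pazocal{F}(X)$ in $\pazocal{K}(X)$, use $\mathcal{F}\cdot\mathcal{F}\subseteq\mathcal{F}$ to get a common set $\Sigma\in\mathcal{F}$ of times $n$ with $\mathrm{diam}\,f_1^n(B_d(x_i,\epsilon/2))>\delta$ for every $i$, pick $y_i\in B_d(x_i,\epsilon/2)$ with $d(f_1^n(x_i),f_1^n(y_i))>\delta/2$, and then \emph{replace} (not add) points: set $z_i=y_i$ when $f_1^n(x_i)$ lies close to the reference point $f_1^n(x_1)$ and $z_i=x_i$ otherwise. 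This forces every point of $f_1^n(B)$, $B=\{z_1,\dots,z_k\}$, to stay a definite distance away from $f_1^n(x_1)$, so the Hausdorff distance is bounded below \emph{from the $A$-side}, $d_H(\overline{f_1^n}(A),\overline{f_1^n}(B))\geq d(f_1^n(x_1),f_1^n(B))$, with $B$ still $\epsilon$-close to $A$ and allowed to depend on $n$ (this is legitimate because $\mathcal{F}$-sensitivity here is phrased as $N(\cdot,\delta)\in\mathcal{F}$). This replacement construction keyed to one reference point is the missing idea in your proposal.

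In the reverse direction your difficulty is self-inflicted: with the paper's formulation of $\mathcal{F}$-sensitivity (membership of $N_{f_{1,\infty}}(U,\delta)$ in $\mathcal{F}$, witnesses allowed to depend on $n$), every $n\in G$ already yields some $y\in L\subseteq U$ with $d(f_1^n(x),f_1^n(y))>\delta'$ because $d_H(\{f_1^n(x)\},\overline{f_1^n}(L))=\max_{y\in L}d(f_1^n(x),f_1^n(y))$ for compact $L$; hence $G\subseteq N_{f_{1,\infty}}(U,\delta')$ and heredity finishes, with no need to take $L$ finite (which you are not entitled to do anyway: hyperspace sensitivity hands you $L$, you do not get to choose it in the dense family) and no Furstenberg hypothesis at all, as the paper notes in Remark 4.1. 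If instead you insist on producing a single $y_i$ with $G_i\in\mathcal{F}$, your proposed extraction does not follow from the stated hypothesis: the union-splitting property ``$F_1\cup F_2\in\mathcal{F}\Rightarrow F_1\in\mathcal{F}$ or $F_2\in\mathcal{F}$'' is equivalent to $\mathcal{F}$ itself being a filterdual, whereas the theorem only assumes $\mathit{k}\mathcal{F}$ is a filterdual, i.e.\ that $\mathcal{F}$ is closed under finite intersections, which gives no such splitting. So as written both halves of your argument have genuine gaps, even though the skeleton matches the paper's.
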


\begin{proof}
Let $(\pazocal{K}$($X),\overline{f}_{1,\infty}$) be $\mathcal{F}$-sensitive with constant of $\mathcal{F}$-sensitivity $\delta>0$. For any $\epsilon>0$ and $x\in X$ let $U=B_d(x,\epsilon)$. Since $B_{d_H}(\{x\},\epsilon)$ is an $\epsilon$- neighbourhood of $\{x\}$ in $\pazocal{K}$($X)$ and $\overline{f_{1}^{\infty}}$ is $\mathcal{F}$-sensitive, therefore $N_{\overline{f_{1,\infty}}}[{B_{d_H}(\{x\},\epsilon),\delta)}]\in \mathcal{F}$ which implies there exist $A\in B_{d_H}(\{x\},\epsilon)$ and $n\geq 0$ such that $d_{H}(\overline{f_{1}^{n}}(\{x\}),\overline{f_{1}^{n}}(A))>\delta$.
\\Hence, we get $y\in A \subset U$ such that $d(f_{1}^{n}(x),f_{1}^{n}(y))>\delta$ which implies $N_{\overline{f_{1,\infty}}}$ $[\cup{B_{d_H}(\{x\},\epsilon),\delta)};x\in U] \subset N_{f_{1,\infty}}(U,\delta)$. As $N_{\overline{f_{1,\infty}}}[\cup{B_{d_H}(\{x\},\epsilon),\delta)};x\in U]\in \mathcal{F}$, therefore $N_{f_{1,\infty}}(U,\delta)\in \mathcal{F}$. Hence, $(X,f_{1,\infty})$ is $\mathcal{F}$-sensitive.

Conversely, assume that $(X,f_{1,\infty})$ is $\mathcal{F}$-sensitive with constant of $\mathcal{F}$-sensitivity $\delta>0$. Since $\pazocal{F}(X)$ is dense in $\pazocal{K}(X)$, it suffices to prove the result for $(\pazocal{F}(X),\overline{f|_{\pazocal{F}(X)}})$.

Let $A=\{x_1, x_2,\ldots, x_k\}\in \pazocal{F}(X)$ and $\epsilon>0$ be arbitrarily chosen. Then $\mathcal{F}$-sensitivity of $(X,f_{1,\infty})$ implies that the set $\{n\in \mathbb{N}$: diam$f_{1}^{n}(B_d(x_i,\epsilon/2))>\delta\}\in \mathcal{F}$, holds for each $i, 1\leq i\leq k$. Also we have $\mathit{k}\mathcal{F}$ is a filterdual, therefore \begin{align}
\Sigma = \bigcap_{i=1}^k\{n\in\mathbb{N}: diam f_{1}^{n}(B_d(x_i,\epsilon/2))>\delta\}\in \mathcal{F}. \nonumber
\end{align}
For any $n \in \Sigma$, there exists $(y_1,y_2,\ldots, y_k)\in B_d(x_1,\epsilon/2)\times \ldots \times B_d(x_k,\epsilon/2)$ such that $d(f_1^n(x_i),f_1^n(y_i))>\delta/2$ for each $i, 1\leq n\leq k$. Take $B=\{z_1,z_2,\ldots, z_k\}$ with \begin{enumerate}
\item $z_i=y_i, d(f_1^n(x_1),f_1^n(x_i))\leq \delta/2$,
\item $z_i=x_i$, otherwise.
\end{enumerate}

Then, \begin{align}
d_H(\overline{f_1^n}(A), \overline{f_1^n}(B))\geq min \{d(f_1^n(x_1),f_1^n(z_i)): 1\leq n\leq k\}>\delta/2.
\nonumber \end{align}
Note that $B\in \pazocal{F}(X)$ and $d_H(A,B)<\epsilon$. Therefore, \begin{align}
\Sigma \subset \{n\in \mathbb{N}: diam \overline{f_1^n}|_{\pazocal{F}(X)}(B_{d_{H}}(A,\epsilon)\cap \pazocal{F}(X))>\delta/2\}\in \mathcal{F}.
\nonumber \end{align}
Hence, $(\pazocal{F}(X), \overline{f_{1,\infty}}|_{\pazocal{F}(X)})$ is $\mathcal{F}$-sensitive. Therefore, $(\pazocal{K}(X),\overline{f_{1,\infty}})$ is $\mathcal{F}$-sensitive.
\end{proof}

\begin{rmk}
Note that the sufficiency part of the above theorem doesn't require $\mathcal{F}$ to be a filterdual. Therefore, $(\pazocal{K}(X),\overline{f_{1,\infty}})$ is $\mathcal{F}$-sensitive $\implies$ $(X,f_{1,\infty})$ is $\mathcal{F}$- sensitive in general.
\end{rmk}

\begin{thm}
Let $(X,f_{1,\infty})$ be a N.D.S generated by a family $f_{1,\infty}$ of feeble open maps commuting with $f$ and $\mathcal{F}$ be a filterdual and translation invariant Furstenberg family. If $\sum_{i=1}^\infty D(f_i,f) < \infty $, then $(X,f)$ is $\mathcal{F}$-sensitive if and only if $(X,f_{1,\infty})$ is $\mathcal{F}$-sensitive.
\end{thm}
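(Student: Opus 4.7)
The plan is to reduce both implications to the weak form of $\mathcal{F}$-sensitivity; by Theorem 3.1 and the filterdual hypothesis, weak $\mathcal{F}$-sensitivity is equivalent to $\mathcal{F}$-sensitivity, so it suffices to show that for every non-empty open $W\subseteq X$ the set $N_{(\cdot)}(W,\eta)$ lies in $\mathcal{F}$ for a single constant $\eta>0$. The summability $\sum_{i=1}^{\infty}D(f_i,f)<\infty$ allows me to fix, once and for all, an $N\in\mathbb{N}$ with $\sum_{i=N+1}^{\infty}D(f_i,f)<\delta/4$, so that Lemma~2.2 yields the uniform estimate $d(f_1^{N+k}(x),f^k(f_1^N(x)))<\delta/4$ for every $x\in X$ and every $k\geq 1$.

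For the implication that $\mathcal{F}$-sensitivity of $(X,f)$ forces $\mathcal{F}$-sensitivity of $(X,f_{1,\infty})$, given a non-empty open $U\subseteq X$, iterated feeble openness produces a non-empty open $V_0\subseteq\mathrm{int}(f_1^N(U))$. The $\mathcal{F}$-sensitivity of $f$ gives $N_f(V_0,\delta)\in\mathcal{F}$; for each $k$ in that set I pick $z_1,z_2\in V_0$ with $d(f^k(z_1),f^k(z_2))>\delta$, lift to $y_1,y_2\in U$ with $f_1^N(y_j)=z_j$, and combine Lemma~2.2 with the triangle inequality to obtain $d(f_1^{N+k}(y_1),f_1^{N+k}(y_2))>\delta/2$. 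Hence $N+N_f(V_0,\delta)\subseteq N_{f_{1,\infty}}(U,\delta/2)$, and translation invariance together with upward heredity places $N_{f_{1,\infty}}(U,\delta/2)\in\mathcal{F}$.

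The converse is the more delicate direction. Given a non-empty open $V\subseteq X$, the hypothesis yields $N_{f_{1,\infty}}(V,\delta)\in\mathcal{F}$; the naive application of Lemma~2.2 would only produce $f^{n-N}$-separation of pairs from $f_1^N(V)$, which is in general not open in $X$. To pull the separation back into $V$ itself, I would combine the commutation $f^{n-N}\circ f_1^N = f_1^N\circ f^{n-N}$ with the uniform continuity of $f_1^N$ on the compact space $X$: choose $\eta>0$ with $d(a,b)<\eta\Rightarrow d(f_1^N(a),f_1^N(b))<\delta/2$. For $n>N$ in $N_{f_{1,\infty}}(V,\delta)$ with witnesses $y_1,y_2\in V$, Lemma~2.2 and the triangle inequality give $d(f^{n-N}(f_1^N(y_1)),f^{n-N}(f_1^N(y_2)))>\delta/2$, which by commutation equals $d(f_1^N(f^{n-N}(y_1)),f_1^N(f^{n-N}(y_2)))$; the contrapositive of the uniform-continuity estimate then forces $d(f^{n-N}(y_1),f^{n-N}(y_2))\geq \eta$, and hence $n-N\in N_f(V,\eta/2)$. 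Translation invariance closes the argument and yields $N_f(V,\eta/2)\in\mathcal{F}$.

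The main obstacle is precisely this commutation-plus-uniform-continuity manoeuvre in the converse direction: without it one can only assert $f$-sensitivity on sets of the form $f_1^N(V)$ rather than on an arbitrary open $V\subseteq X$, since $(f_1^N)^{-1}(V)$ can be empty in general. A minor bookkeeping issue in both directions is the handling of the finite initial segment $[1,N]$ when invoking translation invariance; this is dealt with either by the natural convention that $F-i=\{n-i:n\in F,\,n\geq i\}$ lies in $\mathcal{F}$ whenever $F$ does, or by the filterdual splitting $A\cup B\in\mathcal{F}\Rightarrow A\in\mathcal{F}\text{ or }B\in\mathcal{F}$.
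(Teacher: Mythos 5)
Your argument is correct, and while your forward direction coincides with the paper's (use feeble openness to find a non-empty open set inside $f_1^{N}(U)$, apply $\mathcal{F}$-sensitivity of $f$ there, lift witnesses back to $U$, compare via Lemma 2.2 and the triangle inequality, and finish with translation invariance and upward heredity), your converse takes a genuinely different route. The paper pulls the open set back: it applies $\mathcal{F}$-sensitivity of $(X,f_{1,\infty})$ to $(f_1^{n_0})^{-1}(U)$ and then transfers the separation to $f^k$ on $U$ via Lemma 2.2, keeping the explicit constant $\delta/2$; this tacitly assumes $(f_1^{n_0})^{-1}(U)$ is a non-empty open set, which feeble openness does not guarantee (no surjectivity is hypothesized), so your observation that the preimage can be empty is a real objection to that route. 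You instead apply the non-autonomous sensitivity directly to $V$, push the separation forward with Lemma 2.2, and then use the commutation $f^{k}\circ f_1^{N}=f_1^{N}\circ f^{k}$ together with uniform continuity of the fixed map $f_1^{N}$ to force $d(f^{k}(y_1),f^{k}(y_2))\geq\eta$ for the original witnesses $y_1,y_2\in V$; since $N$ is fixed once by the summability, $\eta$ is a single constant, so this is legitimate, at the price of a non-explicit sensitivity constant $\eta/2$ and a heavier use of the commutation hypothesis than the paper's converse (which uses it only through Lemma 2.2). Your handling of the initial segment $[1,N]$ via the convention on $F-i$ is exactly the paper's implicit usage; the filterdual-splitting alternative is shakier (a proper filterdual may contain finite sets), so rely on the translation-invariance convention. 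Finally, the detour through weak $\mathcal{F}$-sensitivity and Theorem 3.1 is unnecessary: showing $N(W,\eta)\in\mathcal{F}$ for every non-empty open $W$ is already the paper's definition of $\mathcal{F}$-sensitivity.
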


\begin{proof}
Let $(X,f)$ be $\mathcal{F}$-sensitive with constant of $\mathcal{F}$-sensitivity $\delta > 0$. Let $\epsilon > 0$ be given and $U=B(x,\epsilon)$ be a non-empty open set in $X$. As $\sum_{i=1}^{\infty} D(f_n,f) < \infty$, by Lemma 2.2 we get that, for any $\epsilon > 0$, there exists $ n \in \mathbb{N} $ such that $d(f_{1}^{n+k}$ $(x),f^k(f_{1}^{n}$ $(x)))$ $ < \epsilon$, for every $x \in X$ and every $k\in \mathbb{N}$. Choose $m \in \mathbb{N}$ such that $1/m < \delta/4$. Then, there exists $n_0$ such that $d(f_{1}^{n_{0}+k}(x),f^k(f_{1}^{n_0}(x)))< 1/m$, for every $x\in X$ and every $k \in \mathbb{N}$. As $f_n's$ are feeble open, $U'=$int$f_{1}^{n_0}(U)$ is non-empty open and by $\mathcal{F}$-sensitivity of $(X,f)$, we get $N_{f}(U',\delta)\in \mathcal{F}$. Let $k \in N_{f}(U',\delta)$ then there exist $v_1$ and $v_2 \in f_{1}^{n_0}(U)$ such that $d(f^k(v_1),f^k(v_2))> \delta$. As $ v_1, v_2 \in f_{1}^{n_0}(U)$, there exist $v_{1}'$, $v_{2}'$ $\in U$ such that $v_1 = f_{1}^{n_0}(v_1'), v_2 = f_{1}^{n_0}(v_2')$ and \begin{align}d(f^k(f_{1}^{n_0}(v_1'),f^k(f_{1}^{n_0}(v_2'))) > \delta.\nonumber\end{align} Also $d(f_{1}^{n_{0}+k}(v_i'),f^k(f_{1}^{n_{0}}(v_i')))< 1/m$, for $i=1,2$. 

Therefore, by triangle inequality, \begin{align}d(f_{1}^{n_{0}+k}(v_1'),f_{1}^{n_{0}+k}(v_2'))> (\delta-2/m) > \delta/2.\nonumber\end{align} Thus, we obtain $v_1'$ and $v_2' \in U$ such that  \begin{align}d(f_{1}^{n_{0}+k}(v_1'),f_{1}^{n_{0}+k}(v_2'))> \delta/2\nonumber\end{align} which implies $n_{0}+k \in N_{f_{1,\infty}}(U,\delta/2)$ and hence $N_f(U',\delta) + n_{0} \subseteq N_{f_{1,\infty}}(U,\delta/2)$.
Since $N_f(U',\delta)\in \mathcal{F}$ and $\mathcal{F}$ is translation invariant therefore $N_{f_{1,\infty}}(U,\delta/2)\in \mathcal{F}$ and hence $(X,f_{1,\infty})$ is $\mathcal{F}$-sensitive.

Conversely, let $(X,f_{1,\infty})$ be $\mathcal{F}$-sensitive with constant $\delta>0$, $U$ be a non-empty open set in $X$ and $m \in \mathbb{N}$ be such that $1/m<\delta/4$. Then there exists $n_{0}\in\mathbb{N}$ such that $d(f_{1}^{n_{0}+k}(x),f^k(f_{1}^{n_{0}}(x)))<1/m$, for every $x\in X$ and every $k\in \mathbb{N}$. Since $(X,f_{1,\infty})$ is $\mathcal{F}$-sensitive, therefore for any $n \in \mathbb{N}$, the set \{$k:diam(f_{1}^{k}(U))>\delta$\} is infinite. Thus, for open set $(f_{1}^{n_0})^{-1}(U) $, the set $N_{f_{1,\infty}}((f_{1}^{n_0})^{-1}(U),\delta)\in \mathcal{F}$. So there exists $k \in \mathbb{N}$ such that $k+n_0 \in N_{f_{1,\infty}}((f_{1}^{n_0})^{-1}(U),\delta)$ implying there exist $v_1$ , $v_2$ $\in (f_{1}^{n_0})^{-1}(U)$ such that \begin{align}d(f_{1}^{n_{0}+k}(v_1),f_{1}^{n_{0}+k}(v_2))>\delta.\nonumber\end{align} Since $v_1$, $v_2$ $\in ((f_{1}^{n_0})^{-1}(U))$ therefore there exist $v_{1}'$ and $v_{2}'$ $\in U$ such that $v_{1}' = f_{1}^{n_0}(v_1)$ and $v_{2}'=f_{1}^{n_0}(v_2)$ and hence \begin{align}d((f_{n_{0}+k}\circ \ldots \circ f_{n_{0+1}})(v_{1}'),(f_{n_{0}+k}\circ \ldots \circ f_{n_{0+1}})(v_{2}')) > \delta.\nonumber\end{align} Also $d(f_{1}^{n_{0}+k}(v_i),f^k(f_{1}^{n_{0}}(v_i)))<1/m$ or $d(f_{1}^{n_{0}+k}(v_i),f^k(v_i'))<1/m$, for $i=1,2$.

Therefore, by triangle inequality \begin{align}d(f^k(v_{1}'),f^k(v_{2}'))>(\delta-2/m)>\delta/2.\nonumber\end{align} Hence, we obtain $v_{1}'$ and $v_{2}' \in U$ such that $d(f^k(v_{1}'),f^k(v_{2}'))>\delta/2$ which implies $k \in N_{f}(U,\delta/2)$ so $N_{f_{1,\infty}}((f_{1}^{n_0})^{-1}(U),\delta)- n_{0} \subseteq N_{f}(U,\delta/2)$ and $N_{f_{1,\infty}}((f_{1}^{n_0})^{-1}(U),\delta)\in \mathcal{F}$ implies $N_{f}(U,\delta/2)\in \mathcal{F}$.
Therefore, $(X,f)$ is $\mathcal{F}$-sensitive.
\end{proof}

\begin{thm}
Let $(X,f_{1,\infty})$ be a N.D.S. which converges uniformly to a map $f$. If $(X,f_{1,\infty})$ is $\mathcal{F}$-sensitive, then for any $k\geq 1$, $(X,f_{1,\infty}^{[k]})$ is also $\mathcal{F}$-sensitive.
\end{thm}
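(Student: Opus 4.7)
My strategy is to transport each sensitive pair for the original system, occurring at time $n$, to a sensitive pair for the $k$-th iterate at the rounded-down time $\lfloor n/k\rfloor$, using uniform convergence to guarantee that the few intervening applications of $f_j$'s cannot destroy the separation. First, I will apply Lemma 2.1 with $\epsilon=\delta/2$ once for each $r\in\{1,2,\ldots,k-1\}$ (where $\delta>0$ is the $\mathcal{F}$-sensitivity constant of $(X,f_{1,\infty})$), and then take $\eta>0$ to be the minimum of the resulting $\eta_r$'s and $N^\star\in\mathbb{N}$ to be the maximum of the resulting $N_r$'s. This yields a single pair $(\eta,N^\star)$ such that whenever $d(u,v)<\eta$, $0\leq r<k$, and $m\geq N^\star$, one has $d(f_m^r(u),f_m^r(v))<\delta/4$; in other words, all ``short tails'' of length less than $k$ starting from a late enough time are uniformly equicontinuous.

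Next, for an arbitrary open neighborhood $U$ of an arbitrary $x\in X$, I set $A:=N_{f_{1,\infty}}(U,\delta)\in\mathcal{F}$. For each $n\in A$ with $n\geq k(N^\star+1)$, I write $n=km+r$ with $0\leq r<k$; then $km\geq N^\star$ and $f_1^n=f_{km+1}^r\circ f_1^{km}$. Picking $a,b\in U$ which realise $d(f_1^n(a),f_1^n(b))>\delta$, the contrapositive of the first step forces $d(f_1^{km}(a),f_1^{km}(b))\geq\eta$, since otherwise the short tail $f_{km+1}^r$ would shrink the pair to distance $<\delta/4$. Consequently $\lfloor n/k\rfloor=m\in N_{f_{1,\infty}^{[k]}}(U,\eta/2)$.

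Writing $B:=N_{f_{1,\infty}^{[k]}}(U,\eta/2)$, the argument above yields the inclusion
\[
A\cap[k(N^\star+1),\infty)\ \subseteq\ \bigcup_{r=0}^{k-1}(kB+r).
\]
Since $A\in\mathcal{F}$ and $\mathcal{F}$ is hereditary upwards, the right-hand side lies in $\mathcal{F}$ (removing a finite initial segment is harmless for any Furstenberg family whose members are infinite). A standard closure step then extracts $B\in\mathcal{F}$ itself, which says that $\eta/2$ is an $\mathcal{F}$-sensitivity constant for $(X,f_{1,\infty}^{[k]})$.

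I expect the second step -- using Lemma 2.1 in contrapositive form to round down from $n$ to $\lfloor n/k\rfloor$ -- to be the heart of the argument, and essentially mechanical once the lemma is unpacked. The finicky point, which I anticipate to be the main obstacle, is the final $\mathcal{F}$-step: passing from membership of $\bigcup_{r=0}^{k-1}(kB+r)$ to membership of $B$. This is transparent for the natural Furstenberg families (infinite, cofinite, syndetic, thick, positive upper density, translation invariant), but in full generality it requires a closure property beyond the bare ``hereditary upwards'' definition; I would either invoke such a property explicitly or note that the theorem is being applied to families that enjoy it.
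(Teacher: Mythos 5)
Your argument is essentially the paper's: both proofs use Lemma 2.1 to make the short blocks $f_m^r$, $0\le r<k$, uniformly equicontinuous for all large $m$, write a separation time as $n=km+r$ by the division algorithm, and run the lemma in contrapositive to push a separation $>\delta$ at time $n$ back to a separation $\ge\eta$ at the multiple-of-$k$ time $km$, yielding an $\mathcal{F}$-sensitivity constant below $\eta$ for $(X,f_{1,\infty}^{[k]})$. The one substantive difference is the treatment of small times: you discard the finite initial segment of $A=N_{f_{1,\infty}}(U,\delta)$, and your remark that this is ``harmless'' is not justified for a bare Furstenberg family (upward heredity does not give $A\cap[N,\infty)\in\mathcal{F}$), whereas the paper avoids truncation altogether by shrinking the neighbourhood: uniform continuity of the finitely many maps $f_1^i$, $1\le i\le 2N_0$, forces every separation time of a sufficiently small ball to exceed $2N_0$ from the start. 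As for the final step you flag---passing from the inclusion of (the tail of) $A$ in $\bigcup_{r=0}^{k-1}(kB+r)$, where $B=N_{f_{1,\infty}^{[k]}}(U,\eta/2)$, to the assertion $B\in\mathcal{F}$---you are right that this needs a closure property of $\mathcal{F}$ under $n\mapsto\lfloor n/k\rfloor$ (or translation invariance plus an iteration-invariance hypothesis) beyond mere upward heredity; note, however, that the paper's proof performs exactly this passage silently, concluding that $(x,y)$ is not $\mathcal{F}$-$\delta_0$-asymptotic for the $k$-th iterate without verifying that the relevant time set lies in $\mathcal{F}$. So your proposal follows the same route and is, if anything, more explicit than the published argument about the one point where an additional assumption on $\mathcal{F}$ is really being used.
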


\begin{proof}
For any fixed $k\geq 2$, as $(X,f_{1,\infty})$ is $\mathcal{F}$-sensitive with constant of $\mathcal{F}$-sensitivity $\delta>0$, therefore, for any $x\in X$ and any $\epsilon>0$, there exists $y\in B_d(x,\epsilon)$ such that $(x,y)\notin Asym_{\epsilon}(\mathcal{F})$. Hence, there exists $n_{x}(\epsilon)\in \mathbb{N}$ such that $d(f_{1}^{n_x(\epsilon)}(x),f_1^{n_x(\epsilon)}(y))>\delta$. Now, by Lemma 2.1, we have that for any given $\delta>0$, there exist $\eta>0$ and $N_0\geq 3k$ such that for any pair $(x,y)\in X$ with $d(x,y)<\eta$, we have \begin{align*}
& d(f_n^i(x),f_n^i(y))<\delta,\hspace{0.2cm}for\hspace{0.2cm} all\hspace{0.2cm} n\geq N_0\hspace{0.2cm} and\hspace{0.2cm} 0\leq i\leq k \\ \iff & d(f_n^i(x),f_n^i(y))\geq \delta \implies d(x,y)\geq \eta,\hspace{0.2cm} for \hspace{0.2cm} all\hspace{0.2cm} n\geq N_0 \hspace{0.2cm}and\hspace{0.2cm} 0\leq i\leq k. 
\end{align*}
Now, for any $1\leq i\leq 2N_0$, $f_1^i$ is uniformly continuous, therefore for given $\delta>0$, there exists $\epsilon'>0$ such that \begin{align}
d(x_1,x_2)< \epsilon' \implies d(f_1^i(x),f_1^i(y))<\delta,\hspace{0.2cm} for\hspace{0.2cm} any\hspace{0.2cm} x_1, x_2 \in X \hspace{0.2cm}and\hspace{0.2cm} 1\leq i\leq 2N_0.
\end{align}
Hence, for any $x\in X$ and any $\epsilon< \epsilon'$, we have $n_x(\epsilon)> 2N_0\geq 6k$. By division algorithm, there exists $i_0\in \{0,1,2,\ldots, n-1\}$ such that $n_x(\epsilon)=kl+i_0$, for some $l\in \mathbb{N}$ which implies $n_x(\epsilon)-i_0=kl$. Combining this with the fact that for any $\epsilon< \epsilon'$, $d(f_1^{n_x(\epsilon)}(x),f_1^{n_x(\epsilon)}(y))$ = $d(f_{n_x(\epsilon)-i_0+1}^{i_0}(f_1^{n_x(\epsilon)-i_0}(x)),f_{n_x(\epsilon)-i_0+1}^{i_0}(f_1^{n_x(\epsilon)-i_0}(y)))>\delta$. Also, since $n_x(\epsilon)>2N_0\geq 6k$ and $0\leq i_0< k$, therefore $n_x(\epsilon)-i_0+1\geq N_0$.\\ Using (1), we get that for any $\epsilon< \epsilon'$, $d(f_1^{n_x(\epsilon)-i_0}(x),f_1^{n_x(\epsilon)-i_0}(y))\geq\eta.$\\ Now since $k$ divides $n_x(\epsilon)-i_0$, therefore we have $d(f_{k(l-1)+1}^k\circ \ldots \circ f_1^k(x),f_{k(l-1)+1}^k\circ \ldots \circ f_1^k(y))\geq$ $d(f_1^{n_x(\epsilon)-i_0}(x),f_1^{n_x(\epsilon)-i_0}(y))$, and hence we get $d(f_{k(l-1)+1}^k\circ \ldots \circ f_1^k(x),f_{k(l-1)+1}^k\circ \ldots \circ f_1^k(y))\geq\eta$. Thus, $(x,y)$ is not a $\mathcal{F}$-$\delta_0$-asymptotic pair for $(X,f_{1,\infty}^{[k]})$, $0< \delta_{0} < \eta$ implying $(X,f_{1,\infty}^{[k]})$ is $\mathcal{F}$- sensitive, with $\mathcal{F}$-sensitivity constant $0< \delta_{0} < \eta$.
\end{proof}

\begin{rmk}
Converse of the above result is also true. If $(X,f_{1,\infty}^{[k]})$ is $\mathcal{F}$-sensitive, for any $k\in \mathbb{N}$, then in particular for $n=1$, we get that $f_{1,\infty}^{[1]}=(f_{(n-1)+1}^1)_{n=1}^{\infty}$ which implies $(X,f_{1,\infty})$ is $\mathcal{F}$-sensitive.
\end{rmk}

\begin{rmk}
Based on similar arguments, one can prove the above result for $(X,f_{1,\infty})$ being weakly $\mathcal{F}$-sensitive.
\end{rmk}

In \cite{MR3661658}, the authors have investigated various dynamical properties of a non-autonomous system generated by a finite family of maps. We explore $\mathcal{F}$-sensitivity for such non-autonomous systems.
\begin{thm}
Let $(X,f_{1,\infty})$ be a non-autonomous system generated by a finite family of maps, $\mathbb{F}=\{f_1, f_2, \ldots, f_k\}$. If the autonomous system $(X,f_k\circ f_{k-1} \circ \ldots \circ f_1)$ is $\mathcal{F}$-sensitive, then $(X,f_{1,\infty})$ is also $\mathcal{F}$-sensitive. 
\end{thm}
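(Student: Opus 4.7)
The plan is to exploit the cyclic structure of the generating family. Since $f_{1,\infty}$ is built from $\mathbb{F}=\{f_1,\ldots,f_k\}$ with the natural convention $f_{mk+i}=f_i$ for $1\le i\le k$ and $m\ge 0$, a straightforward induction on $n$ will give $f_1^{nk}=F^n$, where $F=f_k\circ\cdots\circ f_1$. This identification along the arithmetic progression $\{nk:n\in\mathbb{N}\}$ is the hook I would use to push $\mathcal{F}$-sensitivity from $(X,F)$ into $(X,f_{1,\infty})$; between times $nk$ and $(n+1)k$ the non-autonomous orbit can in principle be contracted by the $f_i$, so any transfer of sensitivity must go through the subsequence $\{nk\}$ rather than through every index.

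Letting $\delta>0$ denote the $\mathcal{F}$-sensitivity constant of $(X,F)$, I would fix an arbitrary $x\in X$ and an open neighborhood $U$ of $x$. By $\mathcal{F}$-sensitivity of $F$, the set
\begin{align*}
N_F(U,\delta)=\{n\in\mathbb{N}:\exists\,y\in U,\ d(F^n(x),F^n(y))>\delta\}
\end{align*}
lies in $\mathcal{F}$. Using $f_1^{nk}=F^n$, for every $n\in N_F(U,\delta)$ the corresponding witness $y\in U$ automatically satisfies $d(f_1^{nk}(x),f_1^{nk}(y))>\delta$, so $nk\in N_{f_{1,\infty}}(U,\delta)$. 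Consequently $k\cdot N_F(U,\delta)\subseteq N_{f_{1,\infty}}(U,\delta)$.

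The main obstacle is then to deduce $N_{f_{1,\infty}}(U,\delta)\in\mathcal{F}$ from $N_F(U,\delta)\in\mathcal{F}$. Since $\mathcal{F}$ is hereditary upwards, by the inclusion above it suffices to verify that $k\cdot N_F(U,\delta)\in\mathcal{F}$, i.e.\ that $\mathcal{F}$ is closed under multiplication by the fixed positive integer $k$. This closure holds for all the standard Furstenberg families that come up in this setting (infinite subsets, syndetic sets, sets of positive upper density, etc.), and it is the step at which a mild additional hypothesis on $\mathcal{F}$ has to be invoked; without it the sparsification caused by the factor $k$ is the genuine difficulty. Once this is granted, hereditary upwards closes the argument: $N_{f_{1,\infty}}(U,\delta)\in\mathcal{F}$, and since $x$ and $U$ were arbitrary, $(X,f_{1,\infty})$ is $\mathcal{F}$-sensitive with the same constant $\delta$.
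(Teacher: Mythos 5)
Your proposal follows essentially the same route as the paper's proof: both rest on the identity $f_1^{nk}=(f_k\circ\cdots\circ f_1)^n$ and transfer $\mathcal{F}$-sensitivity from $(X,F)$, $F=f_k\circ\cdots\circ f_1$, to $(X,f_{1,\infty})$ along the multiples of $k$. The only real difference is that you explicitly flag the passage from $N_F(U,\delta)\in\mathcal{F}$ to $k\cdot N_F(U,\delta)\in\mathcal{F}$ as requiring an extra property of $\mathcal{F}$, whereas the paper's proof simply writes ``Therefore, the set $\{m\in\mathbb{N}: d(f_1^m(x),f_1^m(y))>\delta\}\in\mathcal{F}$'' with no justification, i.e.\ it uses the same closure-under-multiplication-by-$k$ step silently. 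Your caution is warranted: upward heredity alone does not yield $kA\in\mathcal{F}$ from $A\in\mathcal{F}$ (take $\mathcal{F}$ to be the family of all subsets of $\mathbb{Z}^+$ containing at least one odd integer and $k=2$), so for a completely arbitrary Furstenberg family the argument has a genuine gap --- one that is present in the paper's own proof as well. The step is harmless for the families relevant here, in particular for the family of all infinite subsets of $\mathbb{Z}^+$ used in the paper's example, and more generally whenever $A\in\mathcal{F}$ implies $kA\in\mathcal{F}$ (syndetic sets, sets of positive upper density, etc.). In short, you have not missed any idea that the paper supplies; you have instead made explicit a hypothesis on $\mathcal{F}$ that the paper's statement omits and its proof uses tacitly.
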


\begin{proof}
Let $x\in X$ and $U$ be any open neighbourhood of $x$. Suppose $(X,f_k, f_{k-1}\circ f_1)$ is $\mathcal{F}$-sensitive with $\mathcal{F}$-sensitivity constant $\delta>0$. By $\mathcal{F}$-sensitivity of $(X,f_k\circ f_{k-1}\circ \ldots \circ f_1)$, we get a $y\in U$ such that the set $\{n\in \mathbb{N}:d(({f_1^k})^n(x),({f_1^k})^n)(y))>\delta\} \in \mathcal{F}$. Therefore, the set $\{m\in \mathbb{N}:d(f_1^m(x),f_1^m(y))>\delta\}\in \mathcal{F}$. Hence, $(X,f_{1,\infty})$ is $\mathcal{F}$-sensitive.
\end{proof}

\begin{rmk}
A similar result, as Theorem 3.6, holds true for weakly $\mathcal{F}$-sensitivity. 
\end{rmk}

In the following example, we show that the non-autonomous system generated by the family $\mathbb{F}$ can be $\mathcal{F}$-sensitive, even when none of the maps in the family $\mathbb{F}$ is $\mathcal{F}$-sensitive.

\begin{exm} \end{exm}
Let $I$ be the interval $[0,1]$, $\mathcal{F}$ be the Furstenberg family of all infinite subsets of $\mathbb{Z^+}$ and $f_1, f_2$ on $I$ be defined by: 
\[ f_1(x) = \begin{cases}
4x,  & \text{for} \ x \in \left[0, \frac{1}{4}\right] \\

5/4-x, & \text{for} \ x \in \left[\frac{1}{4},1\right].
\end{cases} \]

\[ f_2(x) = \begin{cases}
1/4-x,  & \text{for} \ x \in \left[0, \frac{1}{4}\right] \\
4x-1, & \text{for} \ x \in \left[\frac{1}{4},\frac{1}{2}\right]\\
2-2x, & \text{for} \ x \in \left[\frac{1}{2},1\right].
\end{cases} \]

Let $\mathbb{F}=\{f_1,f_2\}$ and $(X,f_{1,\infty})$ be non-autonomous system generated by $\mathbb{F}$. Note that [1/4,1] is an invariant set for $f_1$ and [0,1/4] is an invariant set for $f_2$. Therefore, neither $f_1$ nor $f_2$ is $\mathcal{F}$-sensitive. However, their composition given by
 
\[ f_2\circ f_1(x) = \begin{cases}
1/4-4x,  & \text{for} \ x \in \left[0, \frac{1}{16}\right] \\
16x-1, & \text{for} \ x \in \left[\frac{1}{16},\frac{1}{8}\right]\\
2-8x, & \text{for} \ x \in \left[\frac{1}{8},\frac{1}{4}\right]\\
2x-1/2, & \text{for} \ x \in \left[\frac{1}{4},\frac{3}{4}\right]\\
4-4x, & \text{for} \ x \in \left[\frac{3}{4},1\right].
\end{cases} \]
is $\mathcal{F}$-sensitive and hence by Theorem 3.6, the non-autonomous system $(X,f_{1\infty})$ is $\mathcal{F}$-sensitive.

\section*{Acknowledgement}
The first author is funded by GOVERNMENT OF INDIA, MINISTRY OF SCIENCE and TECHNOLOGY No: DST/INSPIRE Fellowship/[IF160750].

\end{document}